\newtheorem{theorem}{Theorem}[section]
\newtheorem{lemma}[theorem]{Lemma}
\newtheorem{corollary}[theorem]{Corollary}
\theoremstyle{definition}
\theoremstyle{remark}
\newtheorem{remark}[theorem]{\bf Remark}
\numberwithin{equation}{section}
\newcommand{\C}{\mathbb{C}}
\newcommand{\R}{\mathbb{R}}
\newcommand{\Sp}{\mathbb{S}}
\newcommand{\Z}{\mathbb{Z}}
\begin{document}

\title[Improved decay for Schr\"odinger flows]{Improved time-decay for a class of scaling critical electromagnetic Schr\"odinger flows}

\author{Luca Fanelli}
\address{Luca Fanelli: Dipartimento di Matematica "Guido Castelnuovo", SAPIENZA Universit$\grave{\text{a}}$ di Roma, P.le A. Moro 5, 00185 Roma, Italy}
\email{fanelli@mat.uniroma1.it}

\author{Gabriele Grillo}
\address{Gabriele Grillo: Dipartimento di Matematica, Politecnico di Milano, Piazza Leonardo da Vinci 32, 20133 Milano, Italy}
\email{gabriele.grillo@polimi.it}

\author{Hynek Kova\v r\'{\i}k}
\address{Hynek Kova\v r\'{\i}k DICATAM, Sezione di Matematica, Universita` degli studi di Brescia, Via Branze, 38, 25123 Brescia, Italy}
\email{hynek.kovarik@ing.unibs.it}


\thanks{
The first author was supported by the project FIRB 2012:
``Dispersive dynamics: Fourier Analysis and Variational Methods'' funded by MIUR. G.~G. and H.~K. were supported by the Gruppo Nazionale per Analisi Matematica, la Probabilit\`a e le loro Applicazioni (GNAMPA) of the Istituto Nazionale di Alta Matematica (INdAM). G.~G. acknowledges support of MIUR-PRIN2012 grant for the project  ``Equazioni alle derivate parziali di tipo ellittico e parabolico: aspetti geometrici, disuguaglianze collegate, e applicazioni''. The support of MIUR-PRIN2010-11 grant for the project  ``Calcolo delle variazioni'' (H.~K.) is also gratefully acknowledged.
}

\subjclass[2000]{35J10, 35L05.}
\keywords{Schr\"odinger equation, electromagnetic potentials, representation
formulas, decay estimates}

\begin{abstract}
  We consider a Schr\"odinger hamiltonian $H(A,a)$ with scaling critical and time independent external electromagnetic potential, and assume that the angular operator $L$ associated to $H$ is positive definite. We prove the following: if $\|e^{-itH(A,a)}\|_{L^1\to L^\infty}\lesssim t^{-n/2}$, then
 $ \||x|^{-g(n)}e^{-itH(A,a)}|x|^{-g(n)}\|_{L^1\to L^\infty}\lesssim t^{-n/2-g(n)}$, $g(n)$ being a positive number, explicitly depending on the ground level of $L$ and the space dimension $n$. We prove similar results also for the heat semi-group generated by $H(A,a)$.
\end{abstract}

\date{\today}

\maketitle


\section{\bf Introduction}\label{sec:intro}
This note is concerned with the dispersive properties of electromagnetic Schr\"odinger groups of the form $e^{-itH}$, where
\begin{equation}\label{eq:H}
  H(A,a) = \left(-i \nabla +|x|^{-1}\ A\Big(\frac{x}{|x|}\Big)\right)^2 +|x|^{-2}\ a\Big(\frac{x}{|x|}\Big)
\end{equation}
acts on a suitable form domain in $L^2(\R^n)$ with $n\geq 2$. Here $A\in W^{1,\infty}(\Sp^{n-1}, \R^n)$ and $a\in W^{1,\infty}(\Sp^{n-1}, \R)$, where $\Sp^{n-1}$ denotes the $n-$dimensional unit sphere. We also assume that $A$ satisfies the transversal gauge condition
\begin{equation}\label{eq:transversal}
A(\theta) \cdot \theta = 0 \qquad \forall\  \theta\in\Sp^{n-1}.
\end{equation}
Once it is well defined, the flow $e^{-itH}$ defines the unique solution $\psi(t,x):=e^{-itH}f(x)$ to the Schr\"odinger equation
\begin{equation}\label{eq:schro}
  \partial_t\psi(t,x)=-iH(A,a)\psi,
  \qquad
  \psi(0,x)=f(x)\in L^2(\R^n).
\end{equation}
The wavefunction $\psi(t,x):\R^{1+n}\to\C$
standardly describes the interaction of a free (non-relativistic) particle with an external time-independent electromagnetic field $(V,B)$
\begin{equation*}
   V(x):=\nabla\left(|x|^{-2}a\left(\frac{x}{|x|}\right)\right),
   \quad
   B(x)=D\left(|x|^{-1}A\left(\frac{x}{|x|}\right)\right)-D\left(|x|^{-1}A\left(\frac{x}{|x|}\right)\right)^t.
\end{equation*}
Notice that the potentials in \eqref{eq:H} are such that equation \eqref{eq:schro} enjoys the same scaling invariance as the free Schr\"odinger equation, namely if $\psi$ solves \eqref{eq:schro}, then
\begin{equation*}
  \psi_\lambda(t,x):=\psi\left(\frac{t}{\lambda},\frac{x}{\lambda^2}\right).
\end{equation*}
solves the same equation, with scaled datum $f_\lambda(x)=f(x/\lambda^2)$.
For those reasons, we refer to \eqref{eq:H} as to a scaling critical electromagnetic hamiltonian.
An important role in the description of $H(A,a)$ is played by the angular hamiltonian
\begin{equation} \label{L}
L= \left(-i \nabla_{\Sp^{n-1}} +A\right)^2 +a \qquad \text{in} \quad L^2(\Sp^{n-1}),
\end{equation}
where $\nabla_{\Sp^{n-1}}$ denotes the spherical gradient on $\Sp^{n-1}$. In the free case $A\equiv a\equiv0$, we have $H(A,a)=-\Delta$ and $L$ coincides with the Laplace-Beltrami operator $L=\Delta_{\Sp^{n-1}}$ on $L^2(\Sp^{n-1})$.
Since $L$ is the inverse of a compact operator, its spectrum is purely discrete and consists of a sequence of eigenvalues $\mu_1(A,a) \leq \mu_2(A,a) \leq \dots $ accumulating at infinity. We denote by $\psi_k$ the corresponding sequence of normalized $L^2$-eigenfunctions:
\begin{equation} \label{psi-k}
L \, \psi_k = \mu_k(A,a)\, \psi_k, \qquad \|\psi_k\|_{L^2(\Sp^{n-1})}= 1.
\end{equation}
By the usual Hardy and diamagnetic inequalities, it is standard to show that the condition
\begin{equation}\label{eq:hardy}
  \mu_1(A,a)>\frac{(n-2)^2}{4}
\end{equation}
implies that the quadratic form associated to $H$ is positive definite, and this allows to work with the Friedrichs extension of $H$ and define the flow $e^{-itH}$ by Spectral Theorem.

In the recent years, a big effort has been spent in order to understand dispersive properties of scaling critical Schr\"odinger flows of the form $e^{-itH}$, with $H$ like in \eqref{eq:H}. The very first results, concerning with Strichartz estimates, have been obtained in the magnetic free case in \cite{BPSTZ1, BPSTZ}. The main idea was to prove some Morawetz-type estimates, based on uniform resolvent estimates, and then obtaining Strichartz estimates by a $TT^\star$-argument. This argument does not seem to work in presence of a scaling critical magnetic potential, and several results have been later obtained for sub-critical potentials, using the same kind of tricks (see e.g. \cite{BG, DF3, DFVV, EGS1, EGS2, GST, PSTZ, RS} and the references therein). Very recently, in \cite{f3p-1}, a quite explicit representation formula for $e^{-itH}$ has been provided, involving a convolution kernel written in terms of eigenvalues and eigenfunctions of $L$, for any critical hamiltonian as in \eqref{eq:H}. By suitably estimating this kernel, it has been possible to prove that the time-decay estimate
\begin{equation}\label{eq:disp}
   \left\|e^{-itH(A,a)}\right\|_{L^1(\R^n)\to L^\infty(\R^n)}\leq C\, t^{-\frac n2}
\end{equation}
holds, for any $t>0$, and some constant $C>0$, in some specific situations. The state of the art is the following: estimate \eqref{eq:disp} holds
\begin{itemize}
\item in dimension $n=3$ for the positive inverse-square electric potential ($A\equiv0$, $0\leq a=\text{constant}$ in \eqref{eq:H} (see \cite{f3p-1})
\item in dimension $n=2$ for any reasonable $A,a$ (see \cite{f3p-2}).
\end{itemize}
At the same time of \cite{f3p-1}, in \cite{gk}, an analogous formula has been independently obtained for $e^{-itH}$ in the specific case of the Aharonov-Bohm potential, i.e. $n=2$, $a\equiv0$, and $A=\alpha(-x_2/|x|,x_1/|x|)$, $\alpha\in\R$ in \eqref{eq:H}. One of the main results in \cite{gk} is the following: in suitable weighted topologies, one can observe a polynomial improvement in the decay rate $t^{-n(2}$; more precisely, the following estimate holds
\begin{equation} \label{eq:GK}
\| |x|^{-g(\alpha)}\ e^{-i t H(A,a)}\, |x|^{-g(\alpha)} \|_{L^1(\R^n)\to L^\infty(\R^n)} \ \leq \ C\ t^{-\frac n2-g(\alpha)},
\end{equation}
where $g(\alpha)=\text{dist}(\alpha,\Z)=\sqrt{\mu_1(A)}$, being $\mu_1$ the first eigenvalue of the angular operator $L$ as above.
The results of \cite{gk} where then extended to a wide class of regular magnetic fields with finite flux in \cite{ko2}.

\smallskip

The main ingredient in the proof of \eqref{eq:GK}, apart from the representation formula, is estimate \eqref{eq:disp}, which for the Aharonov-Bohm potential has been proven in \cite{f3p-1}.
The aim of this paper is to prove that the polynomial improvement in \eqref{eq:GK} is a generic fact, depending on the dimension and the ground level $\mu_1(A,a)$, occurring as a consequence of the dispersive estimate \eqref{eq:disp} holds.

\smallskip

\noindent We are now ready to state the main result of this paper.

\begin{theorem} \label{thm-main}
Let $n\geq 2$, $H(A,a)$ as in \eqref{eq:H}, where $A\in W^{1,\infty}(\Sp^{n-1}, \R^n)$, $a\in W^{1,\infty}(\Sp^{n-1}, \R)$ and $A$ satisfies \eqref{eq:transversal}. Let $L$ be as in \eqref{L},
denote by $\mu_1(A,a)$ its smallest eigenvalue, and assume that $\mu_1(A,a)\geq 0$. In addition, denote by
\begin{equation} \label{gn}
g(n) = \sqrt{\left(\frac{n-2}{2}\right)^2 + \mu_1(A,a)}\ -\frac{n-2}{2}.
\end{equation}
If, for all $t>0$ and some $C_0$, the following estimate holds
\begin{equation} \label{asume}
\| e^{-i t H(A,a)}\|_{L^1(\R^n)\to L^\infty(\R^n)} \ \leq \ C_0\ t^{-\frac n2},
\end{equation}
then there exists a constant $C$ such that
\begin{equation} \label{eq-main}
\big\| |x|^{-g(n)}\ e^{-i t H(A,a)}\, |x|^{-g(n)} \big\|_{L^1(\R^n)\to L^\infty(\R^n)} \ \leq \ C\ t^{-\frac n2-g(n)}
\end{equation}
holds, for all $t>0$.
\end{theorem}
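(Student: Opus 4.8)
The plan is to run everything through the closed-form expression for the propagator kernel established in \cite{f3p-1}. In the normalization of that paper, up to a dimensional constant one has
\begin{align*}
e^{-itH(A,a)}(x,y) &=\frac{e^{\,i\frac{|x|^2+|y|^2}{4t}}}{(2it)^{n/2}}\;\U\!\Big(\tfrac{|x|\,|y|}{2t},\tfrac{x}{|x|},\tfrac{y}{|y|}\Big),\\
\U(r,\theta,\omega)&=r^{-\frac{n-2}{2}}\sum_{k\ge1}i^{-\nu_k}\,J_{\nu_k}(r)\,\psi_k(\theta)\,\overline{\psi_k(\omega)},
\end{align*}
where $J_\nu$ is the Bessel function of order $\nu$ and $\nu_k=\sqrt{\left(\tfrac{n-2}{2}\right)^2+\mu_k(A,a)}$; note that $\nu_1=g(n)+\tfrac{n-2}{2}$ and, since $0\le\mu_1\le\mu_2\le\cdots$, also $0\le\nu_1\le\nu_2\le\cdots$. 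The integral kernel of $|x|^{-g(n)}e^{-itH}|x|^{-g(n)}$ is $(|x|\,|y|)^{-g(n)}$ times the one above, and $\|T\|_{L^1\to L^\infty}$ equals the $L^\infty$-norm of the integral kernel of $T$. Writing $r=\frac{|x|\,|y|}{2t}$, so that $(|x|\,|y|)^{-g(n)}=(2t)^{-g(n)}r^{-g(n)}$, we get
\[
\big\||x|^{-g(n)}e^{-itH}|x|^{-g(n)}\big\|_{L^1\to L^\infty}=\frac{1}{(2t)^{\frac n2+g(n)}}\;\sup_{r>0,\ \theta,\omega\in\Sp^{n-1}}r^{-g(n)}\,\big|\U(r,\theta,\omega)\big| .
\]
The same computation with $g(n)$ replaced by $0$ shows that hypothesis \eqref{asume} is exactly the statement $\|\U\|_{L^\infty}\le C_0\,2^{n/2}$. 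Thus the theorem reduces to proving
\[
\sup_{r>0,\ \theta,\omega\in\Sp^{n-1}}r^{-g(n)}\,|\U(r,\theta,\omega)|<\infty .
\]

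The range $r\ge1$, which corresponds to $|x|\,|y|\ge2t$, is immediate: there $r^{-g(n)}\le1$ because $g(n)\ge0$, and $|\U(r,\theta,\omega)|\le\|\U\|_{L^\infty}<\infty$ by \eqref{asume}. Heuristically, once $|x|\,|y|\gtrsim t$ the weight $(|x|\,|y|)^{-g(n)}$ already supplies the extra factor $t^{-g(n)}$, and no information about the fine structure of $\U$ is needed. The content of the theorem therefore lies entirely in the range $0<r\le1$, where one must \emph{extract} the factor $r^{g(n)}$ from the vanishing of the Bessel functions at the origin; concretely, it suffices to prove $|\U(r,\theta,\omega)|\le C\,r^{g(n)}$ for $0<r\le1$, uniformly in $\theta,\omega\in\Sp^{n-1}$.

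I would obtain this by estimating the series for $\U$ termwise, using the elementary bound $|J_\nu(r)|\le\frac{(r/2)^\nu}{\Gamma(\nu+1)}$ valid for $0\le r\le1$ and $\nu\ge0$ (there the defining series of $J_\nu$ is alternating with strictly decreasing terms). Since $r\le1$ and $\nu_k\ge\nu_1$ we have $(r/2)^{\nu_k}\le r^{\nu_1}2^{-\nu_k}$, hence
\begin{align*}
\big|\U(r,\theta,\omega)\big|
&\le r^{-\frac{n-2}{2}}\sum_{k\ge1}\frac{(r/2)^{\nu_k}}{\Gamma(\nu_k+1)}\,|\psi_k(\theta)|\,|\psi_k(\omega)|\\
&\le r^{\,\nu_1-\frac{n-2}{2}}\sum_{k\ge1}\frac{|\psi_k(\theta)|\,|\psi_k(\omega)|}{2^{\nu_k}\,\Gamma(\nu_k+1)}
\;=\;r^{g(n)}\,S(\theta,\omega),
\end{align*}
with $S(\theta,\omega):=\sum_{k\ge1}\frac{|\psi_k(\theta)|\,|\psi_k(\omega)|}{2^{\nu_k}\Gamma(\nu_k+1)}$. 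By Cauchy--Schwarz it suffices to bound $\sum_{k\ge1}\frac{|\psi_k(\theta)|^2}{2^{\nu_k}\Gamma(\nu_k+1)}$ uniformly in $\theta\in\Sp^{n-1}$, and for this I would invoke two standard facts about the angular operator $L$ on the compact manifold $\Sp^{n-1}$: the Weyl-type counting bound $\#\{k:\mu_k(A,a)\le M\}\lesssim(1+M)^{(n-1)/2}$, and the polynomial eigenfunction bound $\|\psi_k\|_{L^\infty(\Sp^{n-1})}\lesssim(1+\mu_k(A,a))^{(n-1)/4}$ (which follows from elliptic regularity for $L$, or equivalently from the ultracontractivity of $e^{-tL}$, itself a consequence of the diamagnetic inequality and $a\in L^\infty$). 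Since $\mu_k(A,a)=\nu_k^2-\big(\tfrac{n-2}{2}\big)^2\to\infty$, the factor $2^{\nu_k}\Gamma(\nu_k+1)$ eventually dominates any power of $\mu_k$, so these two estimates make the series converge, uniformly in $\theta$. Combining the two ranges yields the displayed reduction and hence \eqref{eq-main}.

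The main obstacle is exactly this last step. The termwise bound is trivial, but summing the series uniformly over the angular variables requires quantitative control of both the distribution of the eigenvalues of $L$ and the sup-norms of the $\psi_k$, and this control must be genuinely uniform on $\Sp^{n-1}$ since the target estimate is of $L^1\to L^\infty$ type. (The companion statement for the heat semigroup $e^{-tH(A,a)}$ proceeds identically, with $J_{\nu_k}$ replaced by the modified Bessel functions $I_{\nu_k}$, for which the same small-argument bound holds, and with the Gaussian factor $e^{-(|x|^2+|y|^2)/4t}$ absorbing their exponential growth in the range $r\ge1$, since $\tfrac{|x|^2+|y|^2}{4t}\ge r$.)
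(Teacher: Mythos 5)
Your argument is correct and follows essentially the same route as the paper: you use the Hankel-type representation of the propagator, split $r=|x||y|/(2t)$ into the ranges $r\ge1$ (where the assumed dispersive estimate handles the weight because $g(n)\ge0$) and $0<r<1$ (where you extract $r^{g(n)}$ from the small-argument Bessel bound and sum the resulting series with the aid of $L^\infty$ eigenfunction bounds and Weyl asymptotics for $L$), exactly as the paper does in \eqref{sup-1}--\eqref{sup-2}. The only discrepancies are cosmetic — the paper phrases the Bessel bound through $I_{\beta_k}$ rather than $J_{\nu_k}$, and the eigenfunction estimate $\|\psi_k\|_\infty\lesssim\mu_k^{b_n}$ is proved via ultracontractivity (Nash inequality plus diamagnetic domination, Lemma \ref{lem-aux}) rather than "elliptic regularity" as such, a route you also correctly identify as an alternative.
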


\begin{remark}\label{rem:interpol}
Notice that, by interpolation between inequalities \eqref{asume} and \eqref{eq-main}, it follows that
\begin{equation}
\big \| |x|^{-\theta g(n)}\ e^{-i t H(A,a)}\, |x|^{-\theta g(n)} \big \|_{L^1(\R^n)\to L^\infty(\R^n)} \ \leq \ C_\theta\ t^{-\frac n2-\theta g(n)}
\end{equation}
holds for all $\theta\in [0,1]$, all $t>0$ and some $C_\theta$.
\end{remark}

\begin{remark}\label{rem:cit}
It well known that, in presence of a magnetic field, one always has $\mu_1(A,a)\geq\mu_1(0,a)$ (by the diamagnetic inequality). Estimate \eqref{eq-main} improves the decay rate of \eqref{asume} as soon as $\mu_1(A,a)>0$, condition which can be standardly rephrased by a variational characterization of $\mu_1$. This means that the more elliptic the angular hamiltonian is, the better the decay is expected to be, in suitable topologies. The validity of \eqref{eq-main} relies in fact on the validity of \eqref{asume}, which is still not known in a generic setting, and will be subject of further investigation in the future.
  Notice that estimate \eqref{eq-main} generalizes \eqref{eq:GK}. For Schr\"odinger evolutions, as far as we know, those are the first results in this direction.

On the other hand, this kind of phenomenon is qualitatively well known for the heat equation. For radial magnetic fields this was observed in \cite{ko1} in the $L^1 \to L^\infty$ setting. Later on, in \cite{kr} it was shown that similar effects occur, in a setting of weighted $L^2$ spaces, for more general class of magnetic fields. We will briefly discuss the properties of the semi-group generated by $H(A,a)$ in Section \ref{sec-heat}.
\end{remark}

\noindent In view of Theorem \ref{thm-main} and the results in \cite{f3p-1, f3p-2} about estimates \eqref{asume}, we have the following corollaries.

\begin{corollary}[Inverse square]\label{cor:3d}
Let $n= 3$ and let $a\geq 0$. Consider the Schr\"odinger Hamiltonian
\begin{equation} \label{inv-square}
  H=-\Delta+\frac{a}{|x|^2}.
\end{equation}
There exists a constant $C>0$ such that the following estimate
\begin{equation} \label{eq-main3d}
\big \|\, |x|^{-\gamma_n }\ e^{-i t H}\, |x|^{-\gamma_n} \, \big \|_{L^1(\R^n)\to L^\infty(\R^n)} \ \leq \ C\ t^{-\frac n2-\gamma_n}
\end{equation}
with
$$
\gamma_n = \sqrt{\frac{(n-2)^2}{4}+a}\, -\frac{n-2}{2}\,
$$
holds for all $t>0$.
\end{corollary}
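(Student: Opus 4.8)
The plan is to obtain Corollary \ref{cor:3d} as a direct specialization of Theorem \ref{thm-main} to the choice $A\equiv 0$, $a\in[0,\infty)$ constant, and $n=3$; the whole argument is a verification that the hypotheses of Theorem \ref{thm-main} are met, together with one external dispersive input.

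First I would identify the angular operator. With $A\equiv 0$ and $a$ constant, \eqref{L} reduces to $L=-\Delta_{\Sp^{2}}+a$ on $L^2(\Sp^{2})$, whose eigenvalues are $k(k+1)+a$, $k\in\N\cup\{0\}$. In particular $\mu_1(0,a)=a\ge 0$, the bottom being attained by the constant eigenfunction, so the standing hypothesis $\mu_1(A,a)\ge 0$ of Theorem \ref{thm-main} holds; moreover $a\ge 0>-1/4=-(n-2)^2/4$, so \eqref{eq:hardy} is satisfied and the Friedrichs extension of $H=-\Delta+a|x|^{-2}$ together with the flow $e^{-itH}$ is well defined by the Spectral Theorem. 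Plugging $n=3$ and $\mu_1=a$ into \eqref{gn} gives
\begin{equation*}
g(3)=\sqrt{\tfrac14+a}-\tfrac12=\gamma_3 ,
\end{equation*}
which is exactly the exponent appearing in \eqref{eq-main3d}.

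Second, I would supply the dispersive estimate \eqref{asume}. For $n=3$, $A\equiv 0$ and $0\le a=\mathrm{const}$, the bound $\|e^{-itH}\|_{L^1\to L^\infty}\lesssim t^{-3/2}$ is precisely the result of \cite{f3p-1} recalled in the introduction; this is the only genuinely nontrivial ingredient, everything else being bookkeeping. With \eqref{asume} in hand, Theorem \ref{thm-main} applies verbatim and yields
\begin{equation*}
\big\| |x|^{-\gamma_3}\, e^{-itH}\, |x|^{-\gamma_3} \big\|_{L^1(\R^3)\to L^\infty(\R^3)}\le C\, t^{-3/2-\gamma_3},
\end{equation*}
which is \eqref{eq-main3d}.

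Since all the substance is carried by Theorem \ref{thm-main} and by the cited bound from \cite{f3p-1}, there is no real obstacle internal to the corollary: the only points requiring a moment's care are checking that $\mu_1(0,a)=a$ (not something smaller, which would violate the sign condition) and matching the normalization of $a$ in \eqref{eq:H} so that the angular symbol is literally $a$, hence $\gamma_3=g(3)$. I would also note that the formula for $\gamma_n$ in the statement is written for general $n$ only for readability; the assertion is made, and can be proved, solely for $n=3$, that being the range in which \eqref{asume} is presently available for the inverse-square potential.
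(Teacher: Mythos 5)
Your proposal is correct and coincides with the paper's own (one-line) proof: specialize Theorem~\ref{thm-main} to $A\equiv 0$, $a$ constant, $n=3$, identify $\mu_1(0,a)=a$ so that $g(3)=\gamma_3$, and invoke the $t^{-3/2}$ dispersive bound from \cite{f3p-1} to verify hypothesis \eqref{asume}. Your additional checks (the Hardy condition \eqref{eq:hardy} and the identification of the bottom eigenvalue) are harmless elaborations of what the paper treats as immediate.
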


\begin{remark}
A one-dimensional analog of Corollary \ref{cor:3d}, without magnetic field of course, was recently established in \cite{kt}.
\end{remark}

\noindent The proof of Corollary \ref{cor:3d} is a straightforward application of Theorem \ref{thm-main} and Theorem 1.11 in \cite{f3p-1}. Analogously, applying Theorem \ref{thm-main} and Theorem 1.1 in \cite{f3p-2}, in the 2d-case we obtain the following result, which holds in a quite general setting.

\begin{corollary}[2D-general case]\label{cor:2d}
  Let $n= 2$, $H(A,a)$ as in \eqref{eq:H}, where $A\in W^{1,\infty}(\Sp^{1}, \R^2)$, $a\in W^{1,\infty}(\Sp^{1}, \R)$ and $A$ satisfies \eqref{eq:transversal}. Let $L$ be as in \eqref{L},
denote by $\mu_1(A,a)$ its smallest eigenvalue, and assume that $\mu_1(A,a)\geq 0$.
There exists a constant $C>0$ such that the following estimate
\begin{equation} \label{eq-main2d}
\big \| |x|^{-\sqrt{\mu_1(A,a)}}\ e^{-i t H(A,a)}\, |x|^{-\sqrt{\mu_1(A,a)}} \, \big \|_{L^1(\R^n)\to L^\infty(\R^n)} \ \leq \ C\ t^{-\frac32-\sqrt{\mu_1(A,a)}}
\end{equation}
holds, for all $t>0$.
\end{corollary}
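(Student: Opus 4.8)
Corollary~\ref{cor:2d} is the two-dimensional specialization of Theorem~\ref{thm-main}: when $n=2$ we have $\tfrac{n-2}{2}=0$, so \eqref{gn} reduces to $g(2)=\sqrt{\mu_1(A,a)}$ and \eqref{eq-main2d} is literally \eqref{eq-main}. Hence the plan is: (i) verify the hypothesis \eqref{asume} with $n=2$ in the stated generality, i.e. $\|e^{-itH(A,a)}\|_{L^1(\R^2)\to L^\infty(\R^2)}\lesssim t^{-1}$ for every $A\in W^{1,\infty}(\Sp^1,\R^2)$, $a\in W^{1,\infty}(\Sp^1,\R)$ with $A$ obeying \eqref{eq:transversal} and $\mu_1(A,a)\ge0$ — this is precisely Theorem~1.1 of \cite{f3p-2}; (ii) feed it into Theorem~\ref{thm-main}; (iii) rewrite the conclusion using $g(2)=\sqrt{\mu_1(A,a)}$ to obtain \eqref{eq-main2d}. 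Once Theorem~\ref{thm-main} is available these three steps are immediate, so the substance lies entirely in Theorem~\ref{thm-main}, whose proof the corollary inherits; I outline it below.

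For Theorem~\ref{thm-main} I would start from the representation formula of \cite{f3p-1}, which under the present hypotheses writes the integral kernel of $e^{-itH(A,a)}$ in a form separated into radial and angular variables, of the schematic shape
\[
K_t(x,y)\ =\ \frac{e^{\,i(|x|^2+|y|^2)/4t}}{2it}\,(|x|\,|y|)^{-\frac{n-2}{2}}\sum_{k\geq 1} i^{-\nu_k}\,J_{\nu_k}\!\Big(\tfrac{|x|\,|y|}{2t}\Big)\,\psi_k\!\Big(\tfrac{x}{|x|}\Big)\,\overline{\psi_k\!\Big(\tfrac{y}{|y|}\Big)},
\]
where $\psi_k$ are the normalized eigenfunctions of \eqref{psi-k} and $\nu_k=\sqrt{(\tfrac{n-2}{2})^2+\mu_k(A,a)}$, so that $\nu_1=g(n)+\tfrac{n-2}{2}$ and $1+\tfrac{n-2}{2}=\tfrac n2$. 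Since the $L^1\to L^\infty$ norm of an integral operator equals the essential supremum of the modulus of its kernel, both \eqref{asume} and \eqref{eq-main} are pointwise assertions about $K_t$, and it suffices to show $(|x|\,|y|)^{-g(n)}|K_t(x,y)|\lesssim t^{-n/2-g(n)}$ for a.e.\ $x,y$ and all $t>0$. I would split according to whether $|x|\,|y|\le t/2$ or $|x|\,|y|>t/2$.

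On the region $|x|\,|y|>t/2$ I would use only \eqref{asume} together with $g(n)\ge0$: $(|x|\,|y|)^{-g(n)}|K_t(x,y)|\le (t/2)^{-g(n)}\,\|e^{-itH}\|_{L^1\to L^\infty}\lesssim t^{-n/2-g(n)}$. On the region $|x|\,|y|\le t/2$ I would exploit the angular positivity: with $w:=|x|\,|y|/4t\le1/8$ and the bound $|J_\nu(2w)|\lesssim w^{\nu}/\Gamma(\nu+1)$, valid uniformly for $w$ small and $\nu\ge0$, and using $\nu_k\ge\nu_1$ to write $w^{\nu_k}\le w^{\nu_1}(1/8)^{\nu_k-\nu_1}$, one gets
\[
|K_t(x,y)|\ \lesssim\ t^{-1}(|x|\,|y|)^{-\frac{n-2}{2}}\,w^{\nu_1}\sum_{k\geq1}\frac{(1/8)^{\nu_k-\nu_1}}{\Gamma(\nu_k+1)}\,\|\psi_k\|_{L^\infty(\Sp^{n-1})}^2 .
\]
Calling the (constant) series $S$ and inserting $w^{\nu_1}\asymp(|x|\,|y|)^{\nu_1}t^{-\nu_1}$, the geometric factors collapse to $(|x|\,|y|)^{\nu_1-\frac{n-2}{2}}\,t^{-1-\nu_1}$; since $\nu_1-\tfrac{n-2}{2}=g(n)$ and $t^{-1-\nu_1}=t^{-n/2-g(n)}$, multiplying by $(|x|\,|y|)^{-g(n)}$ yields the desired bound. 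Combining the two regions proves \eqref{eq-main}. Note that \eqref{asume} enters only in the regime $|x|\,|y|\gtrsim t$, precisely the regime where the series representation is hard to control directly — which is why the unweighted estimate must be assumed.

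The main obstacle is the finiteness of $S$, i.e.\ the uniform summability of the near-region series. This requires a polynomial sup-norm bound $\|\psi_k\|_{L^\infty(\Sp^{n-1})}\lesssim(1+\mu_k(A,a))^{\sigma}$ for some $\sigma=\sigma(n)$, obtained by elliptic regularity and Sobolev embedding (or Sogge-type spectral cluster bounds) applied to the eigenvalue equation \eqref{psi-k} — this is where the regularity $A,a\in W^{1,\infty}(\Sp^{n-1})$ is used — together with the fact that $\mu_k(A,a)\to\infty$ fast enough in $k$ (Weyl asymptotics give a polynomial rate, far more than needed), so that the factor $(1/8)^{\nu_k}/\Gamma(\nu_k+1)$ defeats the algebraic growth. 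One should also record the degenerate alternative: if $\mu_1(A,a)=0$ then $g(n)=0$ and \eqref{eq-main} reduces to \eqref{asume}, while if $\mu_1(A,a)>0$ then $g(n)>0$ and the decay genuinely improves — for $n=2$ this is exactly the dichotomy $\mu_1(A,a)>0$ versus $\mu_1(A,a)=0$ underlying Corollary~\ref{cor:2d}.
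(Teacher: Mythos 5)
Your proof of the corollary is exactly the paper's: invoke Theorem~1.1 of \cite{f3p-2} to secure hypothesis \eqref{asume} in dimension $n=2$, then apply Theorem~\ref{thm-main} and observe $g(2)=\sqrt{\mu_1(A,a)}$, so \eqref{eq-main2d} is \eqref{eq-main} verbatim. Your sketch of Theorem~\ref{thm-main}'s proof also follows the paper's scheme (split at $|x||y|\sim t$, use \eqref{asume} on the far region, use the small-argument Bessel bound plus a polynomial $L^\infty$ bound on $\psi_k$ and Weyl growth of $\mu_k$ on the near region); the only substantive divergence is in how the eigenfunction bound of Lemma~\ref{lem-aux} is obtained — you suggest elliptic regularity plus Sobolev embedding or Sogge-type spectral cluster estimates, whereas the paper deduces it from a Nash inequality and $L^1\to L^\infty$ ultracontractivity of the angular heat semigroup (via Kato's domination and the Davies argument) — but both routes deliver the required $\|\psi_k\|_\infty\lesssim\mu_k^{b_n}$ and this detail does not affect the corollary.
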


\section{\bf Proof of Theorem \ref{thm-main}}\label{sec:proof}
\noindent The main tool for the proof of Theorem \ref{thm-main} is the following representation formula
\begin{equation} \label{evol}
  \big( e^{-i t H(A,a)}\, u_0 \big)(x) = -\frac{i\, e^{\frac{i|x|^2}{4t}}}{(2t)^{n/2}} \, \int_{\R^n} K\Big(\frac{x}
  {\sqrt{2t}}, \frac{y}{\sqrt{2t}}\Big)\, e^{\frac{i |y|^2}{4t}}\, u_0(y)\, dy,
\end{equation}
for any $u_0\in C_0^\infty(\R^n)$, proved in \cite{f3p-2}. Here we are denoting by
\begin{equation} \label{kernel-J}
  K(x,y) =(|x| \, |y|)^{\frac{2-n}{2}}   \sum_{k\in\Z} i^{-\beta_k} \, J_{\beta_k} (|x| |y|) \ \psi_k\Big(\frac{x}{|x|}
  \Big)\, \overline{\psi_k\Big(\frac{y}{|y|}\Big)},
\end{equation}
where $\psi_k$ are the eigenfunctions of $L$, and
\begin{equation}
\alpha_k = \frac{n-1}{2}-\sqrt{\left(\frac{n-2}{2}\right)^2 + \mu_k(A,a)}\, , \qquad
\beta_k =\sqrt{\left(\frac{n-2}{2}\right)^2 + \mu_k(A,a)}\,
\end{equation}
$\mu_k$ being the eigenvalues of $L$.
By \cite[Eq.~9.6.3]{as} it follows that $i^{-\nu} J_\nu(z) = I_\nu(z/i)$ for any $z\in\C$, where $I_\nu$ is the Bessel function of the second kind. Therefore we can write
\begin{equation} \label{kernel-I}
K(x,y) =(|x| \, |y|)^{\frac{2-n}{2}}   \sum_{k\in\Z}\,   I_{\beta_k} (-i |x| |y|) \ \psi_k\Big(\frac{x}{|x|}\Big)\, \overline{\psi_k\Big(\frac{y}{|y|}\Big)}.
\end{equation}
It is crucial to our purpose to obtain suitable asymptotic information about $\mu_k$ and $\psi_k$. First notice that, expanding the square in \eqref{L}, one easily gets that the following inequalities hold in the sense of quadratic forms on $H^1(\Sp^{n-1})$
\begin{equation} \label{two-sided}
  -\frac 12\, \Delta_{\Sp^{n-1}} -\|A\|^2_{\infty} -\|a\|_{\infty} \ \leq\  L(A,a) \ \leq \
  -\frac 32\, \Delta_{\Sp^{n-1}} + 3\|A\|^2_{\infty} +\|a\|_{\infty}.
\end{equation}
Since the eigenvalues of $- \Delta_{\Sp^{n-1}}$ grow as $k^{\frac{2}{n-1}}$, a straightforward consequence of the mini-max principle is that
\begin{equation} \label{mu-k}
 \mu_k(A,a) \ \sim\ k^{\frac{2}{n-1}} \qquad |k|\to \infty.
\end{equation}
We also need the following auxiliary Lemma, concerning with the eigenfunctions $\psi_k's$.
\begin{lemma} \label{lem-aux}
  For every $n\geq 2$ there exists $b_n\geq 0$ such that for all $k\in\Z$ it holds
  \begin{equation} \label{psi-upperb}
  \|\psi_k \|_{L^\infty(\Sp^{n-1})} \ \lesssim \ \mu_k(A,a)^{b_n}.
\end{equation}
\end{lemma}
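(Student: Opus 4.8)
The plan is to deduce the pointwise bound on $\psi_k$ from the ultracontractivity of the semigroup $e^{-tL}$ on $L^2(\Sp^{n-1})$, combined with the eigenvalue identity $e^{-tL}\psi_k = e^{-t\mu_k}\psi_k$. Since we assume $\mu_1(A,a)\ge 0$, the operator $L$ in \eqref{L} is non‑negative, so $e^{-tL}$ is a well defined contraction semigroup on $L^2(\Sp^{n-1})$, and $e^{-t\mu_k}\le 1$ for all $k$.

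The first step is to establish ultracontractivity of $e^{-tL}$. By the diamagnetic (Feynman--Kac--It\^o) inequality on the compact manifold $\Sp^{n-1}$, together with $a\ge -\|a\|_\infty$, the heat kernel of $L$ is dominated pointwise by that of the Laplace--Beltrami operator up to an exponential factor,
\[
  |e^{-tL}(\theta,\omega)|\ \le\ e^{t\|a\|_\infty}\, e^{t\Delta_{\Sp^{n-1}}}(\theta,\omega),\qquad \theta,\omega\in\Sp^{n-1}.
\]
Since $\Sp^{n-1}$ is a compact $(n-1)$-dimensional manifold, one has the classical bound $\|e^{t\Delta_{\Sp^{n-1}}}\|_{L^1\to L^\infty}\lesssim t^{-(n-1)/2}$ for $0<t\le 1$; moreover $e^{-tL}$ is bounded on $L^\infty$ with $\|e^{-tL}\|_{L^\infty\to L^\infty}\le e^{t\|a\|_\infty}$, again by the diamagnetic inequality and the fact that $e^{t\Delta_{\Sp^{n-1}}}$ preserves constants. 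Interpolating these two bounds yields, for some $c=c(A,a)\ge 0$,
\[
  \|e^{-tL}\|_{L^2(\Sp^{n-1})\to L^\infty(\Sp^{n-1})}\ \lesssim\ e^{ct}\,t^{-\frac{n-1}{4}},\qquad 0<t\le 1.
\]

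Next, using $\|\psi_k\|_{L^2}=1$ and $e^{-tL}\psi_k=e^{-t\mu_k}\psi_k$ with $\mu_k\ge 0$, for every $0<t\le 1$ one gets
\[
  \|\psi_k\|_{L^\infty(\Sp^{n-1})}\ =\ e^{t\mu_k}\,\|e^{-tL}\psi_k\|_{L^\infty(\Sp^{n-1})}\ \le\ e^{t\mu_k}\,\|e^{-tL}\|_{L^2\to L^\infty}\ \lesssim\ e^{(\mu_k+c)t}\,t^{-\frac{n-1}{4}}.
\]
Choosing $t=(1+\mu_k)^{-1}\le 1$ keeps $e^{(\mu_k+c)t}$ bounded and leaves $\|\psi_k\|_{L^\infty}\lesssim(1+\mu_k)^{\frac{n-1}{4}}$; since by \eqref{mu-k} all but finitely many $\mu_k$ exceed $1$, this is exactly \eqref{psi-upperb} with $b_n=\tfrac{n-1}{4}$.

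The only delicate point is the first step: the two‑sided form inequality \eqref{two-sided} does not by itself allow one to compare the semigroups $e^{-tL}$ and $e^{\frac t2\Delta_{\Sp^{n-1}}}$, because the magnetic semigroup is not positivity preserving, so one genuinely needs the diamagnetic inequality to transfer the (standard) heat‑kernel bounds on the sphere to $L$. Everything after that is the routine device "an $L^2$‑normalised eigenfunction equals $e^{t\mu_k}$ times the semigroup at time $t$ applied to it", optimised over $t$. An alternative that avoids the semigroup is an elliptic bootstrap for $(-i\nabla_{\Sp^{n-1}}+A)^2\psi_k=(\mu_k-a)\psi_k$, based on the diamagnetic inequality in $H^1$ and Sobolev embeddings; but the $W^{1,\infty}$ magnetic term makes iterating in higher Sobolev norms more cumbersome, so I would favour the semigroup route.
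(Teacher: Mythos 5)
Your proof is correct, and it reaches the same exponent $b_n=\tfrac{n-1}{4}$ as the paper, by a genuinely more streamlined route. The paper also starts from Kato's domination $|e^{-t(L(A,a)+\|a\|_\infty)}u|\le e^{t\Delta_{\Sp^{n-1}}}|u|$, but uses it only to deduce $L^1$-contractivity; ultracontractivity is then obtained via a Nash-type inequality (magnetic Sobolev embedding on $\Sp^{n-1}$ from Hebey plus the diamagnetic inequality, followed by H\"older) fed into Davies' argument, and this requires a case distinction between $n\geq 4$ and $n=3$ because the Sobolev exponent $\frac{2(n-1)}{n-3}$ degenerates at $n=3$ (handled via Trudinger--Moser). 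You instead read the ultracontractivity bound $\|e^{-tL}\|_{L^1\to L^\infty}\lesssim e^{ct}t^{-\frac{n-1}{2}}$ directly off the kernel domination by $e^{t\Delta_{\Sp^{n-1}}}$, then pass to $L^2\to L^\infty$ by interpolation (or, even more simply, by self-adjointness: $\|e^{-2tL}\|_{L^1\to L^\infty}=\|e^{-tL}\|_{L^2\to L^\infty}^2$). This short-circuits the Nash/Davies machinery, needs no case distinction in $n$, and uses exactly the same input (the Kato diamagnetic domination) that the paper already invokes. The final step — write $\psi_k=e^{t\mu_k}e^{-tL}\psi_k$ and optimize over $t$ — is identical in both proofs, modulo the cosmetic choice $t=(1+\mu_k)^{-1}$ versus $t=\min\{1,\mu_k^{-1}\}$. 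The only thing you give up is that for $n=2$ the paper quotes the sharper value $b_2=0$ from \cite{f3p-2}, whereas your general argument yields $b_2=\tfrac14$; this is harmless for the application, since the series in \eqref{sup-2} converges for any polynomial $b_n$ thanks to \eqref{mu-k} and the $\Gamma(\beta_k+\tfrac12)$ in the denominator.

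One small point of care, which affects the paper's proof equally: with $t\sim(1+\mu_k)^{-1}$ the bound you obtain is $\|\psi_k\|_{L^\infty}\lesssim(1+\mu_k)^{b_n}$; this matches the statement $\lesssim\mu_k^{b_n}$ only because $\mu_k\to\infty$ by \eqref{mu-k}, so the finitely many indices with $\mu_k\le 1$ are absorbed into the implicit constant (and if $\mu_1=0$ the displayed form of \eqref{psi-upperb} should really read $(1+\mu_k)^{b_n}$). You flag this correctly.
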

\begin{proof}
In dimension $n=2$ the result holds with $b_2=0$ (see \cite[Lem.~2.1]{f3p-2}). Hence we may assume that $n\geq 3$. Let us first prove the statement for $n\geq 4$. In this case by \cite[Thm.~2.6]{he} for every $2\leq q \leq \frac{2n-2}{n-3}$ there exists  $C_q$ such that
\begin{equation} 
\|f\|_{L^q(\Sp^{n-1})}^2 \,  \leq\, C_q  \left( \|\nabla f\|_{L^2(\Sp^{n-1})}^2 +\|f\|_{L^2(\Sp^{n-1})}^2\right) \qquad \forall\ f\in W^{1,2}(\Sp^{n-1}),
\end{equation}
The diamagnetic inequality;
\begin{equation} \label{diamag-1}
|\nabla |f| | \,\leq \, |(-i \nabla_{\Sp^{n-1}} +A) f| \qquad \text{a. e.},
\end{equation}
see e.~g. \cite[Theorem~7.21]{LL}, then implies that
\begin{equation} \label{sobolev-n4}
\|f\|_{L^q(\Sp^{n-1})}^2 \, \leq \, C_q  \left(  \|(-i \nabla_{\Sp^{n-1}} +A) f\|_{L^2(\Sp^{n-1})}^2 +\|f\|_{L^2(\Sp^{n-1})}^2\right)  \, .
\end{equation}
The last inequality in combination with H\"older inequality then yield
\begin{equation} \label{nash-n4}
\|f\|_{L^2(\Sp^{n-1})}^{2+\frac{4}{n-1}} \, \leq \, \tilde{C}  \left(  \|(-i \nabla_{\Sp^{n-1}} +A) f\|_{L^2(\Sp^{n-1})}^2 +\|f\|_{L^2(\Sp^{n-1})}^2\right)  \, \|f\|_{L^1(\Sp^{n-1})}^{\frac{4}{n-1}}\, .
\end{equation}
On the other hand, from \cite{ka} we deduce that
$$
\left | e^{-t(L(A,a) +\|a\|_\infty)}\, u \right| \, \leq \, \left | e^{-t L(A,0) }\, u \right| \, \leq \, e^{t  \Delta_{\Sp^{n-1}} }\, |u|
$$
holds for all $u\in L^1(\Sp^{n-1})\cap L^2(\Sp^{n-1})$. Hence the semigroup $e^{-t(L(A,a) +\|a\|_\infty)}$ is for every $t>0$ a contraction on $L^1(\Sp^{n-1})$. By adapting the proof of \cite[Thm.~2.4.6, Cor.~2.4.7]{da} we then conclude that
\begin{equation} \label{ultra-n4}
\| e^{-t(L(A,a) +\|a\|_\infty)}\, u \|_{L^\infty(\Sp^{n-1})}\, \leq c_n\, t^{-\frac{n-1}{4}}\, \|u\|_{L^2(\Sp^{n-1})}, \qquad \forall\ u\in {L^2(\Sp^{n-1})}
\end{equation}
for some $c_n$ and all $t\in(0,1]$. In fact, the argument given there works under the only assumptions that the semigroup considered is a contraction on $L^1$ and that a Nash-type inequality like \eqref{nash-n4} holds true.

If we now choose $u=\psi_k$ and $ t=\min\{1, \frac{1}{\mu_k}\}$, then \eqref{psi-k} and \eqref{mu-k} imply estimate \eqref{psi-upperb} for $n\geq 4$ and $b_n =\frac{n-1}{4}$.

To prove the claim for $n=3$, we note that in this case \eqref{sobolev-n4} holds for any $q\in[2,\infty)$. In fact, this can be seen e.g. as a consequence of the analogue of Trudinger-Moser's inequality on compact Riemannian manifolds, see e.g. \cite{Au}. Consequently we obtain the Nash-type inequality in the form
\begin{equation} \label{nash-n3}
\|f\|_{L^2(\Sp^{2})}^{4} \, \leq \, C'  \left(  \|(-i \nabla_{\Sp^{2}} +A) f\|_{L^2(\Sp^{2})}^2 +\|f\|_{L^2(\Sp^{2})}^2\right)  \, \|f\|_{L^1(\Sp^{2})}^{2}\, .
\end{equation}
Following the lines of the above arguments we thus obtain \eqref{psi-upperb} with $b_3=\frac 12$.
\end{proof}

\noindent We are finally ready to finish the proof of our main result.
\begin{proof}[\bf Proof of Theorem \ref{thm-main}]
First recall the integral representation
\begin{equation} \label{int-repr}
I_\nu(w)= \frac{w^\nu}{2^\nu\, \Gamma(\nu+\frac 12)\, \Gamma(\frac 12)}\,
\int_{-1}^1\, (1-s^2)^{\nu-\frac 12}\, e^{w s}\, ds, \qquad w\in\C,
\end{equation}
see \cite[Eq.~9.6.18]{as}, which yields the upper bound
\begin{equation} \label{I-upperb}
| I_\nu(i z)| \, \lesssim \, \frac{|z|^\nu}{2^\nu\, \Gamma(\nu+\frac 12)} \qquad \forall\ z\in\R, \ \nu\geq 0.
\end{equation}
Let us now define
\begin{equation} \label{eq-z}
z: = \frac{|x|\, |y|}{2t}, \qquad t>0,
\end{equation}
and write $\R^{2n} = \Omega_1 \cup \Omega_2$, where
\begin{equation} \label{split}
\Omega_1 = \big\{(x,y) \in \R^{2n}\, :\, 1\leq z \big\}, \qquad \Omega_2 = \big\{(x,y) \in \R^{2n}\, :\, 0\leq z <1 \big\}\, .
\end{equation}
By \eqref{asume} and \eqref{evol}, it follows that
\begin{equation} \label{sup-K}
\sup_{(x,y)\in \R^{2n}} |K(x,y)| < \infty.
\end{equation}
Hence using \eqref{split} and the fact that $g(n)\geq 0$, by assumption, we find
\begin{align} \label{sup-1}
C_1 & : =\sup_{(x,y)\in \Omega_1} \left(\frac{|x|\, |y|}{2t}\right)^{-g(n)}\, \left |\, K \Big(\frac{x}{\sqrt{2t}}, \frac{y}{\sqrt{2t}}\Big) \right |  \nonumber \\
& \ = \sup_{(x,y)\in \Omega_1} z^{-g(n)}\, \left |\, K \Big(\frac{x}{\sqrt{2t}}, \frac{y}{\sqrt{2t}}\Big) \right | \  <\  \infty .
\end{align}
On the other hand, by combining Lemma \ref{lem-aux} and equation \eqref{gn} with \eqref{kernel-I} and \eqref{I-upperb} we obtain
\begin{align} \label{sup-2}
C_2 & : =\sup_{(x,y)\in \Omega_2} \left(\frac{|x|\, |y|}{2t}\right)^{-g(n)}\, \left |\, K \Big(\frac{x}{\sqrt{2t}}, \frac{y}{\sqrt{2t}}\Big) \right |  \nonumber \\
&\ \lesssim \sup_{0\leq z <1}\, \sum_{k\in\Z} \, \frac{z^{(\beta_k-g(n)-\frac{n-2}{2})}\, \mu_k(A,a)^{b_n}}{2^{\beta_k}\, \Gamma(\beta_k +\frac 12)} \ < \ \infty ,
\end{align}
where the convergence of the series is guaranteed by \eqref{mu-k}.
This together with \eqref{sup-1} implies that
\begin{equation}
\sup_{(x,y)\in \R^{2n}} |x|^{-g(n)}\, \left |\, K \Big(\frac{x}{\sqrt{2t}}, \frac{y}{\sqrt{2t}}\Big) \right |\, |y|^{-g(n)} \ \leq \ (2t)^{-g(n)}\ \max\, \{C_1, C_2\},
\end{equation}
which in view of equation \eqref{evol} proves estimate \eqref{eq-main}.
\end{proof}

\section{\bf The heat semi-group}
\label{sec-heat}

\noindent For the heat semi-group generated by $H(A,a)$ we have a result analogous to Theorem \ref{thm-main}. It is important to notice that the analogue of \eqref{asume} for the heat semigroup is not an assumption in this setting, but holds true in general. 

\begin{theorem} \label{thm-heat}
Let $n\geq 2$ and let $H(A,a)$ be given by \eqref{eq:H}, with  $A\in W^{1,\infty}(\Sp^{n-1}, \R^n)$ and  $0\leq a\in W^{1,\infty}(\Sp^{n-1}, \R)$. Suppose that $A$ satisfies \eqref{eq:transversal}. Then there exists a constant $C$ such that
\begin{equation} \label{eq-heat}
\big\| |x|^{-g(n)}\ e^{- t H(A,a)}\, |x|^{-g(n)} \big\|_{L^1(\R^n)\to L^\infty(\R^n)} \ \leq \ C\ t^{-\frac n2-g(n)}
\end{equation}
holds for all $t>0$.
\end{theorem}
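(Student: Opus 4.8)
The plan is to follow the proof of Theorem \ref{thm-main} almost line by line, the only genuinely new ingredient being the heat analogue of the representation formula \eqref{evol}. First I would establish that, for $u_0\in C_0^\infty(\R^n)$,
\[
\big(e^{-tH(A,a)}u_0\big)(x) = \frac{e^{-\frac{|x|^2}{4t}}}{(2t)^{n/2}}\int_{\R^n}\widetilde K\Big(\frac{x}{\sqrt{2t}},\frac{y}{\sqrt{2t}}\Big)\,e^{-\frac{|y|^2}{4t}}\,u_0(y)\,dy ,
\]
where
\[
\widetilde K(x,y) = (|x|\,|y|)^{\frac{2-n}{2}}\sum_{k\in\Z} I_{\beta_k}(|x|\,|y|)\,\psi_k\Big(\frac{x}{|x|}\Big)\,\overline{\psi_k\Big(\frac{y}{|y|}\Big)} ,
\]
with $\beta_k$ and $\psi_k$ exactly as in Section \ref{sec:proof}. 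This may be obtained by running the separation-of-variables computation of \cite{f3p-2} with the heat kernel $\frac{1}{2t}(r\rho)^{\frac{2-n}{2}}e^{-\frac{r^2+\rho^2}{4t}}I_{\beta_k}(\frac{r\rho}{2t})$ of the radial operator $-\partial_r^2-\frac{n-1}{r}\,\partial_r+\frac{\mu_k(A,a)}{r^2}$ in place of its Schr\"odinger counterpart; equivalently, it is the analytic continuation $t\mapsto -it$ of \eqref{evol}--\eqref{kernel-I} (legitimate since $e^{-tH(A,a)}$ is analytic in $t$), which turns the quadratic phases into Gaussians and the Bessel argument $-i|x||y|/(2t)$ into the real nonnegative number $|x||y|/(2t)$. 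Setting $\xi=x/\sqrt{2t}$, $\eta=y/\sqrt{2t}$ and $z=|x||y|/(2t)=|\xi||\eta|$, the target estimate \eqref{eq-heat} is then equivalent to
\[
\sup_{\xi,\eta\in\R^n}\ (|\xi|\,|\eta|)^{-g(n)}\,e^{-\frac{|\xi|^2+|\eta|^2}{2}}\,\big|\widetilde K(\xi,\eta)\big|\ <\ \infty .
\]

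The role played by the hypothesis \eqref{asume} in Theorem \ref{thm-main} is, in the parabolic case, taken over for free by diamagnetic domination: since $0\le a$, the potential $|x|^{-2}a(x/|x|)$ is nonnegative, so Kato's inequality (see \cite{ka,LL}) gives the pointwise bound $|e^{-tH(A,a)}(x,y)|\le(4\pi t)^{-n/2}e^{-|x-y|^2/(4t)}$; comparing with the representation formula yields $|\widetilde K(\xi,\eta)|\lesssim e^{|\xi||\eta|}$, hence $e^{-(|\xi|^2+|\eta|^2)/2}\,|\widetilde K(\xi,\eta)|\lesssim e^{-(|\xi|-|\eta|)^2/2}\le1$ uniformly. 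Then, exactly as in Section \ref{sec:proof}, I would split $\R^{2n}=\Omega_1\cup\Omega_2$ as in \eqref{split}. On $\Omega_1=\{z\ge1\}$ one has $(|\xi||\eta|)^{-g(n)}=z^{-g(n)}\le1$ since $g(n)\ge0$, so the supremum over $\Omega_1$ is finite by what precedes.

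On $\Omega_2=\{0\le z<1\}$ one uses $e^{-(|\xi|^2+|\eta|^2)/2}\le1$, the bound $\|\psi_k\|_{L^\infty(\Sp^{n-1})}^2\lesssim\mu_k(A,a)^{2b_n}$ from Lemma \ref{lem-aux}, and the estimate $|I_\nu(w)|\lesssim w^\nu/(2^\nu\Gamma(\nu+\tfrac12))$, valid for real $0\le w<1$ and $\nu\ge0$ as an immediate consequence of \eqref{int-repr} (the cancellation exploited in \eqref{I-upperb} being now unavailable), to arrive at
\[
(|\xi|\,|\eta|)^{-g(n)}\,\big|\widetilde K(\xi,\eta)\big|\ \lesssim\ \sum_{k\in\Z}\frac{z^{\,\beta_k-g(n)-\frac{n-2}{2}}\,\mu_k(A,a)^{2b_n}}{2^{\beta_k}\,\Gamma(\beta_k+\tfrac12)} .
\]
By \eqref{gn}, $g(n)+\tfrac{n-2}{2}=\sqrt{(\tfrac{n-2}{2})^2+\mu_1(A,a)}$, and since $\mu_1(A,a)$ is the smallest eigenvalue this quantity is $\le\beta_k$ for every $k$; hence each exponent $\beta_k-g(n)-\tfrac{n-2}{2}\ge0$, so every power of $z\in[0,1)$ in the sum is at most $1$. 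The series converges because \eqref{mu-k} forces $\beta_k\sim|k|^{1/(n-1)}\to\infty$, so that $\Gamma(\beta_k+\tfrac12)$ outgrows the polynomial factor $\mu_k(A,a)^{2b_n}$. Taking the maximum over the two regimes gives the displayed supremum bound, and hence \eqref{eq-heat}.

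I expect the only non-mechanical point to be the first step --- verifying the heat representation formula on a dense class of data and extending it to $L^1$ --- but this is strictly parallel to the derivation of \eqref{evol} in \cite{f3p-2} and should be routine. It is worth stressing where the sign condition $a\ge0$ enters: it is precisely what makes the free-type heat bound, and hence the uniform control of $\widetilde K$ against the Gaussian weight on $\Omega_1$ (where $I_{\beta_k}(z)$ grows like $e^z$), hold unconditionally, so that no analogue of the hypothesis \eqref{asume} is needed here.
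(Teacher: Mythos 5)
Your proposal is correct and mirrors the paper's proof essentially step for step: you derive the heat analogue of the representation formula (the paper packages the Gaussian factor into the kernel $G$, while you keep it separate, but this is purely cosmetic), obtain boundedness of the kernel from diamagnetic domination together with $a\geq 0$ (the paper does this at the level of the $L^1\to L^\infty$ norm, citing \cite{hs,mor}, whereas you invoke the equivalent pointwise Gaussian bound), and then run the same $\Omega_1/\Omega_2$ dichotomy using the integral representation \eqref{int-repr}, Lemma \ref{lem-aux}, and the eigenvalue asymptotics \eqref{mu-k}. Your sharper bookkeeping of the eigenfunction factor as $\mu_k^{2b_n}$ (rather than the paper's $\mu_k^{b_n}$, apparently a typo in \eqref{sup-2} and \eqref{sup-4}) does not affect convergence of the series, so the argument goes through unchanged.
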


\begin{proof}
Let $u_0\in C_0^\infty(\R^n)$. A straightforward modification of equation \eqref{evol} gives
\begin{equation} \label{evol-heat}
  \big( e^{- t H(A,a) }\, u_0 \big)(x) = \frac{1}{(2t)^{n/2}} \, \int_{\R^n} G\Big(\frac{x}
  {\sqrt{2t}}, \frac{y}{\sqrt{2t}}\Big)\,  u_0(y)\, dy,
\end{equation}
where
\begin{equation} \label{eq-G}
G(x,y) =(|x| \, |y|)^{\frac{2-n}{2}}\, e^{-\frac{ |x|^2+|y|^2}{2}}   \sum_{k\in\Z}\,   I_{\beta_k} (|x| |y|) \ \psi_k\Big(\frac{x}{|x|}\Big)\, \overline{\psi_k\Big(\frac{y}{|y|}\Big)}.
\end{equation}
By domination of Schr\"odinger semi-groups, see e.g.~\cite{hs} and diamagnetic inequality for singular magnetic operators, see \cite{mor},, it follows that
$$
\big\|  e^{- t H(A,a)}  \big\|_{L^1(\R^n)\to L^\infty(\R^n)}  \leq   \big\|  e^{- t H(A,0)}  \big\|_{L^1(\R^n)\to L^\infty(\R^n)}  \leq
\big\|  e^{ t\Delta }  \big\|_{L^1(\R^n)\to L^\infty(\R^n)}  =  (2t)^{-\frac n2} .
$$
Hence
\begin{equation} \label{sup-G}
\sup_{(x,y)\in \R^{2n}} \left |\, G(x,y) \right| < \infty.
\end{equation}
We now follow the line of arguments of the proof of Theorem \ref{thm-main} and with the same notion we obtain
\begin{align} \label{sup-3}
C_3 & : =\sup_{(x,y)\in \Omega_1} \left(\frac{|x|\, |y|}{2t}\right)^{-g(n)}\,  \left |\,  G \Big(\frac{x}{\sqrt{2t}}, \frac{y}{\sqrt{2t}}\Big)  \right| \nonumber \\
& \ = \sup_{(x,y)\in \Omega_1} z^{-g(n)}\,  \left |\,  G \Big(\frac{x}{\sqrt{2t}}, \frac{y}{\sqrt{2t}}\Big) \right|  \  <\  \infty ,
\end{align}
in view of \eqref{sup-G}. On the other hand, by \eqref{int-repr}
\begin{align*}
e^{-\frac{ |x|^2+|y|^2}{2}}\,  I_{\beta_k} (|x| |y|) & \, \leq\,  e^{-|x| |y|}\,  I_{\beta_k} (|x| |y|) \, \leq \,  \frac{(|x|\, |y|)^{\beta_k}}{ 2^{\beta_k}\, \Gamma(\beta_k+\frac 12)} \
\end{align*}
Similarly as in the proof of Theorem \ref{thm-main} we thus conclude that
\begin{align} \label{sup-4}
C_4 & : =\sup_{(x,y)\in \Omega_2} \left(\frac{|x|\, |y|}{2t}\right)^{-g(n)}\, \left |\,  G \Big(\frac{x}{\sqrt{2t}}, \frac{y}{\sqrt{2t}}\Big)    \right| \nonumber \\
&\ \leq  \sup_{0\leq z <1}\, \sum_{k\in\Z} \, \frac{z^{(\beta_k-g(n)-\frac{n-2}{2})}\, \mu_k(A,a)^{b_n}}{2^{\beta_k}\, \Gamma(\beta_k +\frac 12)} \ < \ \infty .
\end{align}
This in combination with \eqref{sup-3} implies that
$$
\sup_{(x,y)\in \R^{2n}} |x|^{-g(n)}\, \left |\, G \Big(\frac{x}{\sqrt{2t}}, \frac{y}{\sqrt{2t}}\Big)  \right| \, |y|^{-g(n)} \ \leq \ (2t)^{-g(n)} \max\, \{C_3, C_4\},
$$
which proves the claim.
\end{proof}

\begin{remark}
For certain values of $n, A$ and $a$ the result of Theorem \ref{thm-heat} is not new. For example, the case $n=2$ and $a=0$ was treated in \cite[Sect.~6]{ko1}. On the other hand, when $A=0$ and $a$ is constant, then upper bound \eqref{eq-heat} is equivalent to \cite[Thm.~2]{ms}.
\end{remark}

\bigskip


\end{document}